\numberwithin{equation}{section}
\theoremstyle{plain}
\newtheorem{theorem}{Theorem}[section]
\newtheorem{proposition}[theorem]{Proposition}
\newtheorem{lemma}[theorem]{Lemma}
\newtheorem{definition}[theorem]{Definition}
\theoremstyle{remark}
\newtheorem{remark}[theorem]{Remark}
\DeclarePairedDelimiter\abs{\lvert}{\rvert}
\DeclarePairedDelimiter\norm{\lVert}{\rVert}
\def\E{\mathbb{E}}
\DeclarePairedDelimiterXPP\EE[1]{\E}{\lparen}{\rparen}{}{#1} 
\DeclarePairedDelimiterX\Set[1]\{\}{%

#1
}
\def\C{\mathbb{C}}
\newcommand*{\N}{\mathbb{N}}
\newcommand*{\R}{\mathbb{R}}
\newcommand{\dif}{\mathop{}\!\mathrm{d}} 
\def\PZdefchar#1{
  \expandafter\def\csname frak#1\endcsname{\mathfrak{#1}}
  \expandafter\def\csname rm#1\endcsname{\mathrm{#1}}
  \expandafter\def\csname bb#1\endcsname{\mathbb{#1}}
  \expandafter\def\csname bf#1\endcsname{\mathbf{#1}}
  \expandafter\def\csname scr#1\endcsname{\mathcal{#1}}
  \expandafter\def\csname cal#1\endcsname{\mathcal{#1}}}
\def\PZdefloop#1{\ifx#1\PZdefloop\else\PZdefchar#1\expandafter\PZdefloop\fi}
\title[Martingale square function]{Weighted Davis inequalities\\ for martingale square functions}
\author[D.~Wollgast]{Dennis Wollgast}
\author[P.~Zorin-Kranich]{Pavel Zorin-Kranich}
\address{Mathematical Institute\\ University of Bonn}
\email{pzorin@uni-bonn.de}
\subjclass[2020]{60L20 (Primary) 60G44, 60G46, 60H05 (Secondary)}
\begin{document}
\begin{abstract}
For a Hilbert space valued martingale $(f_{n})$ and an adapted sequence of positive random variables $(w_{n})$, we show the weighted Davis type inequality
\[
\E \Bigl( \abs{f_{0}} w_{0} + \frac{1}{4} \sum_{n=1}^{N} \frac{\abs{df_{n}}^{2}}{f^{*}_{n}} w_{n} \Bigr)
\leq
\E( f^{*}_{N} w^{*}_{N}).
\]
This inequality is sharp and implies several results about the martingale square function.
We also obtain a variant of this inequality for martingales with values in uniformly convex Banach spaces.
\end{abstract}
\maketitle

\section{Introduction}
Throughout this article, $(f_{n})_{n\in\N}$ denotes a martingale on a filtered probability space $(\Omega,(\calF_{n})_{n\in\N})$ with values in a Banach space $(X,\abs{\cdot})$.
A \emph{weight} is a positive random variable on $\Omega$.
We denote martingale differences and running maxima by
\[
df_{n} = f_{n}-f_{n-1},
\quad
f^{*}_{n} := \max_{n'\leq n} \abs{f_{n'}},
\quad
w^{*}_{n} := \max_{n'\leq n} w_{n'}.
\]
We begin with the Hilbert space valued case of our main result (Theorem~\ref{thm:S<M}).
\begin{theorem}
\label{thm:Hilbert}
Let $(f_{n})_{n\in\N}$ be a martingale with values in a Hilbert space $(X=H,\abs{\cdot})$.
Let $(w_{n})_{n\in\N}$ be an adapted sequence of weights (that need not be a martingale).
Then, for every $N\in\N$, we have
\begin{equation}
\label{eq:Hilbert-L1}
\E \Bigl( \abs{f_{0}} + \frac{1}{3} \sum_{n=1}^{N} \frac{\abs{df_{n}}^{2}}{f^{*}_{n}} \Bigr)
\leq
\E( f^{*}_{N} )
\end{equation}
and
\begin{equation}
\label{eq:Hilbert-A1}
\E \Bigl( \abs{f_{0}} w_{0} + \frac{1}{4} \sum_{n=1}^{N} \frac{\abs{df_{n}}^{2}}{f^{*}_{n}} w_{n} \Bigr)
\leq
\E( f^{*}_{N} w^{*}_{N}).
\end{equation}
\end{theorem}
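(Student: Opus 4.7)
My plan is a Bellman-function argument exploiting the Hilbert identity $\abs{f_{n}}^{2} - \abs{f_{n-1}}^{2} = \abs{df_{n}}^{2} + 2\langle f_{n-1}, df_{n}\rangle$. I will introduce the auxiliary function
\[
B(x,y) := \frac{2y}{3} + \frac{\abs{x}^{2}}{3y}, \qquad 0 \leq \abs{x} \leq y,
\]
which satisfies $B(x, \abs{x}) = \abs{x}$ and $B(x,y) \leq y$. Both \eqref{eq:Hilbert-L1} and \eqref{eq:Hilbert-A1} then follow by telescoping, applied respectively to $B(f_{n},f^{*}_{n})$ and $V_{n} := w^{*}_{n} B(f_{n}, f^{*}_{n})$, once the right one-step estimates are in place.

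For \eqref{eq:Hilbert-L1} the key is the unweighted conditional estimate
\[
\E[B(f_{n}, f^{*}_{n}) - B(f_{n-1}, f^{*}_{n-1}) \mid \calF_{n-1}] \geq \tfrac{1}{3}\, \E[\abs{df_{n}}^{2}/f^{*}_{n} \mid \calF_{n-1}].
\]
I would prove this by splitting on whether the running maximum increases. On $\{\abs{f_{n}}\leq f^{*}_{n-1}\}$ one has $f^{*}_{n} = f^{*}_{n-1}$, and the parallelogram identity $\E[\abs{df_{n}}^{2}\mid \calF_{n-1}] = \E[\abs{f_{n}}^{2} - \abs{f_{n-1}}^{2}\mid \calF_{n-1}]$ produces exact equality. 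On the complementary "new maximum" event a direct rearrangement leaves a pathwise remainder proportional to $2\bigl(f^{*}_{n-1}\abs{f_{n}} - \langle f_{n}, f_{n-1}\rangle\bigr) + \abs{f_{n-1}}^{2}$, nonnegative by Cauchy--Schwarz and $\abs{f_{n-1}} \leq f^{*}_{n-1}$. Telescoping with $B(f_{0}, \abs{f_{0}}) = \abs{f_{0}}$ and $B(f_{N}, f^{*}_{N}) \leq f^{*}_{N}$ yields \eqref{eq:Hilbert-L1}.

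For \eqref{eq:Hilbert-A1} I would decompose the increment of $V_{n}$ as
\[
V_{n} - V_{n-1} = w^{*}_{n-1}\bigl(B(f_{n}, f^{*}_{n}) - B(f_{n-1}, f^{*}_{n-1})\bigr) + (w^{*}_{n} - w^{*}_{n-1})\, B(f_{n}, f^{*}_{n}).
\]
The first summand is controlled by the unweighted concavity above multiplied by $w^{*}_{n-1}$. The second summand is pointwise nonnegative since $w^{*}_{n} \geq w^{*}_{n-1}$, and I would bound it using the sharp pointwise inequality
\[
B(x, y) \geq \frac{\abs{x - a}^{2}}{4y} \qquad \text{for } \abs{a}, \abs{x} \leq y,
\]
which after expansion and $\langle x, a\rangle \geq -\abs{x}\abs{a}$ reduces to the quadratic inequality $(\abs{x}-y)(\abs{x}-5y) \geq 0$ on $\{\abs{x}\leq y\}$. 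A short arithmetic check of $\tfrac{w^{*}_{n-1}}{3} + \tfrac{(w_{n}-w^{*}_{n-1})_{+}}{4} \geq \tfrac{w_{n}}{4}$ then yields the conditional bound $\E[V_{n} - V_{n-1} \mid \calF_{n-1}] \geq \tfrac{1}{4}\, \E[w_{n}\abs{df_{n}}^{2}/f^{*}_{n} \mid \calF_{n-1}]$; telescoping with $V_{0} = \abs{f_{0}} w_{0}$ and $V_{N} \leq f^{*}_{N} w^{*}_{N}$ gives \eqref{eq:Hilbert-A1}.

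I expect the main obstacle to be selecting the Bellman function $B$ and verifying the unweighted concavity in the "new maximum" branch: the coefficients in $B$ are forced by requiring exact equality in the "no new maximum" branch (which dictates the sharp constant $\tfrac{1}{3}$), and the pathwise algebra in the complementary branch must be arranged to close with the correct sign.
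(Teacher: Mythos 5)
Your proposal is correct. For the unweighted inequality \eqref{eq:Hilbert-L1} it is essentially the paper's own argument in different notation: your $B(x,y)=\frac{2y}{3}+\frac{\abs{x}^{2}}{3y}$ is (up to an affine normalization) the paper's Bellman function $U$ specialized to $q=2$, $\tilde\delta=1$, $v=w=1$, $\gamma=3$, and your case analysis on whether the running maximum increases, with the remainder $2\bigl(f^{*}_{n-1}\abs{f_{n}}-\langle f_{n},f_{n-1}\rangle\bigr)+\abs{f_{n-1}}^{2}\geq 0$, reproduces the paper's ``Estimate 1'' (I checked: after clearing denominators the new-maximum branch reduces exactly to a positive multiple of that expression). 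For the weighted inequality \eqref{eq:Hilbert-A1}, however, you take a genuinely different route. The paper builds the weight into a four-variable function $U(x,y,m,v)$ with the larger constant $\gamma=4$ (potential $\frac{\abs{x}^{2}+3m^{2}}{4m}\,v$) and proves a single concavity statement, absorbing the weight jump via a second estimate and the side conditions $\gamma\geq 2q$, $\gamma\geq 2^{q}$. You instead keep the \emph{sharp unweighted} potential with $\gamma=3$, perform a summation by parts in the weight, $V_{n}-V_{n-1}=w^{*}_{n-1}\,\Delta B+(w^{*}_{n}-w^{*}_{n-1})B(f_{n},f^{*}_{n})$, and control the weight-jump term by the clean pointwise bound $B(x,y)\geq \abs{x-a}^{2}/(4y)$ for $\abs{a},\abs{x}\leq y$ (which indeed reduces to $(\abs{x}-y)(\abs{x}-5y)\geq 0$), closing with the arithmetic $\frac{w^{*}_{n-1}}{3}+\frac{(w_{n}-w^{*}_{n-1})_{+}}{4}\geq\frac{w_{n}}{4}$. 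All of these steps check out, and the approach is arguably more transparent in the Hilbert setting because it isolates exactly where the loss from $1/3$ to $1/4$ occurs (only when the weight attains a new maximum); the paper's formulation is heavier but is set up to generalize to $q$-uniformly convex Banach spaces, where your exact Hilbert-space identities are not available. One presentational caution: the event $\{\abs{f_{n}}\leq f^{*}_{n-1}\}$ is $\calF_{n}$- but not $\calF_{n-1}$-measurable, so you should state the one-step estimate as a pathwise inequality with the martingale remainder $\frac{2\langle f_{n-1},df_{n}\rangle}{3f^{*}_{n-1}}$ kept explicit (as the paper does with $\phi'(x)h$), rather than conditioning separately on each branch.
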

A quantity similar to the left-hand side of \eqref{eq:Hilbert-L1}, but with $f^{*}_{N}$ in place of $f^{*}_{n}$ and hence smaller, appeared in \cite[\textsection 3]{MR331502}.

In order to relate our result to the usual martingale square function
\[
Sf := \bigl( \sum_{n=1}^{N} \abs{df_{n}}^{2} \bigr)^{1/2},
\]
we note that, by Hölder's inequality,
\begin{equation}
\label{eq:Sf-Holder}
\E Sf
\leq
\E \Bigl( (f^{*}_{N})^{1/2} \bigl( \sum_{n=1}^{N} \frac{\abs{df_{n}}^{2}}{f^{*}_{n}} \bigr)^{1/2} \Bigr)
\leq
\Bigl( \E f^{*}_{N} \Bigr)^{1/2} \Bigl( \E \sum_{n=1}^{N} \frac{\abs{df_{n}}^{2}}{f^{*}_{n}} \Bigr)^{1/2}.
\end{equation}
By one of the Burkholder--Davis--Gundy inequalties \cite{MR0268966}, we have $\E f^{*}_{N} \leq C \E Sf$ for martingales with $f_{0}=0$ (the optimal value of $C$ does not seem to be known; the value $C=\sqrt{10}$ was obtained in \cite[\nopp II.2.8]{MR0448538}).
Assuming that both sides are finite, this implies
\[
\E Sf
\leq
C \E \sum_{n=1}^{N} \frac{\abs{df_{n}}^{2}}{f^{*}_{n}}
\]
with the same constant $C$.
Thus, we see that \eqref{eq:Hilbert-L1} adds a new equivalence to the $L^{1}$ Burkholder--Davis--Gundy inequalties.

The proof of Theorem~\ref{thm:Hilbert} is based on Burkholder's proof of the Davis inequality for the square function with the sharp constant \cite{MR1859027} and its weighted extension by Os\k{e}kowski \cite{MR3567926}.
Note, however, that the weights in the latter article are assumed to be continuous in time, so that it does not yield weighted estimates in discrete time.
The estimate \eqref{eq:Hilbert-A1} is instead motivated by \cite{MR3688518}, where the Davis inequality for the martingale maximal function was proved with a similar combination of weights $(w,w^{*})$.
Such weighted inequalities go back to \cite{MR0284802}, see also \cite[Theorem 3.2.3]{MR3617205} for a martingale version.

\subsection{Sharpness of the constants}
Both estimates \eqref{eq:Hilbert-L1} and \eqref{eq:Hilbert-A1} are sharp, in the sense that the constants $1/3$ and $1/4$ cannot be replaced by any larger constants.

The sharpness of \eqref{eq:Hilbert-L1} is due to the fact that it implies the sharp version of the Davis inequality for the expectation of the martingale square function \cite{MR1859027}.
To see this, for notational simplicity, suppose $f_{0}=0$.
By \eqref{eq:Sf-Holder} and \eqref{eq:Hilbert-L1}, we have
\begin{align*}
\E Sf
\leq
\sqrt{3} \E f^{*}_{N}.
\end{align*}
Since the constant $\sqrt{3}$ is the smallest possible in this inequality \cite[\textsection 5]{MR1859027}, also the constant in \eqref{eq:Hilbert-L1} is optimal.

The sharpness of \eqref{eq:Hilbert-A1} is proved in Section~\ref{sec:sharpness-of-4}.

\subsection{Consequences of the weighted estimate}
\label{sec:extrapolation}
Here, we show how Theorem~\ref{thm:Hilbert} can be used to recover a number of known inequalities.

Let $r\in [1,2]$, $w$ be an integrable weight, $w_{n} = \E(w|\calF_{n})$, $f^{*} = f^{*}_{\infty}$, and $w^{*}=w^{*}_{\infty}$.
For simplicity, we again assume $f_{0}=0$.
By Hölder's inequality and \eqref{eq:Hilbert-L1}, we obtain
\begin{equation}
\label{eq:S-w}
\begin{split}
\E \Bigl( (Sf)^{r} \cdot w \Bigr)
&\leq
\E \Bigl( (f^{*})^{(2-r)r/2} \bigl( \sum_{n=1}^{N} \frac{\abs{df_{n}}^{2}}{f^{*}_{n}} (f^{*}_{n})^{r-1} \bigr)^{r/2} w \Bigr)
\\ &\leq
\Bigl( \E (f^{*})^{r} w \Bigr)^{1-r/2}
\Bigl( \E \bigl( \sum_{n=1}^{N} \frac{\abs{df_{n}}^{2}}{f^{*}_{n}} (f^{*}_{n})^{r-1} w \bigr) \Bigr)^{r/2}
\\ &=
\Bigl( \E (f^{*})^{r} w \Bigr)^{1-r/2}
\Bigl( \E \bigl( \sum_{n=1}^{N} \frac{\abs{df_{n}}^{2}}{f^{*}_{n}} (f^{*}_{n})^{r-1} w_{n} \bigr) \Bigr)^{r/2}
\\ &\leq
2^{r} \Bigl( \E ( f^{*} )^{r} w \Bigr)^{1-r/2} \Bigl( \E ( f^{*} )^{r} w^{*} \Bigr)^{r/2}.
\end{split}
\end{equation}
If we estimate $w\leq w^{*}$ in the first term on the right-hand side, we recover a version of the main result in \cite{MR3567926}.
Our version has a worse constant, but does not require the weights to be continuous in time.

Recall that the \emph{$A_{1}$ characteristic} of a weight $w$ is the smallest constant $[w]_{A_{1}}$ such that $w^{*} \leq [w]_{A_{1}} w$.
As a direct consequence of the estimate \eqref{eq:S-w}, we obtain the estimate
\begin{equation}
\label{eq:1-weight-A1}
\E \Bigl( \bigl( \sum_{n=1}^{N} \abs{df_{n}}^{2} \bigr)^{1/2} w \Bigr)
\leq
2 [w]_{A_{1}}^{1/2} \E( f^{*} w)
\end{equation}
for $A_{1}$ weights $w$.
This improves the main result of \cite{MR3738183}, where a similar estimate (with $\sqrt{5}$ in place of $2$) was proved for dyadic martingales.
In view of \cite[Theorem 1.3]{BrzOse2021}, it seems unlikely that the $A_{1}$ characteristic in \eqref{eq:1-weight-A1} can be replaced by a function of any $A_{p}$ characteristic with $p>1$, although for dyadic martingales even the $A_{\infty}$ characteristic suffices \cite[Theorem 2]{MR352854}.

By a version of the Rubio de Francia argument, one can deduce further $L^{p}$ weighted estimates from \eqref{eq:S-w} (with $r=1$).
Let $p \in (1,\infty)$, $w$ a weight, and $\tilde{w} = w^{-p'/p}$ the dual weight, where $p'$ denotes the Hölder conjugate that is determined by $1/p+1/p'=1$.
Let also
\[
Mh := \sup_{n \in \N} \abs{\E(h | \calF_{n})}
\]
denote the martingale maximal operator.
Then, for any function $u$, by \eqref{eq:S-w} and Hölder's inequality, we obtain
\begin{equation}
\label{eq:Sf-u-w}
\begin{split}
\E ( Sf \cdot u \cdot w )
&\leq 2
\E ( Mf \cdot M(uw) )^{1/2} \cdot \E ( Mf \cdot (uw) )^{1/2}
\\ &\leq 2
(\E (Mf)^{p} w)^{1/p} (\E (M(uw))^{p'} w^{-p'/p})^{1/(2p')} (\E u^{p'} w)^{1/(2p')}.
\end{split}
\end{equation}
By definition of the operator norm, we have
\begin{align*}
(\E (M(uw))^{p'} w^{-p'/p})^{1/p'}
&\leq
\norm{ M }_{L^{p'}(\tilde{w}) \to L^{p'}(\tilde{w})}
(\E (uw)^{p'} w^{-p'/p})^{1/p'}
\\ &=
\norm{ M }_{ L^{p'}(\tilde{w}) \to L^{p'}(\tilde{w})}
(\E u^{p'} w)^{1/p'}.
\end{align*}
Substituting this into \eqref{eq:Sf-u-w}, we obtain
\[
\E ( Sf \cdot u \cdot w )
\leq 2
\norm{ M }_{ L^{p'}(\tilde{w}) \to L^{p'}(\tilde{w})}^{1/2}
(\E (Mf)^{p} w)^{1/p} (\E u^{p'} w)^{1/p'}.
\]
By duality, this implies
\begin{equation}
\label{eq:S-Lp-w}
\norm{ Sf }_{L^{p}(w)}
\leq 2
\norm{ M }_{ L^{p'}(\tilde{w}) \to L^{p'}(\tilde{w})}^{1/2}
\norm{ f^{*} }_{L^{p}(w)}.
\end{equation}
In the case $w\equiv 1$, using Doob's maximal inequality \cite[Theorem 3.2.2]{MR3617205}, this recovers the following version of the martingale square function inequality, which matches \cite[Theorem 3.2]{MR365692}:
\[
\norm{ Sf }_{L^{p}} \leq 2 \sqrt{p} p' \norm{ f }_{L^{p}}.
\]
More generally, an $A_{p}$ weighted BDG inequality can be obtained from \eqref{eq:S-Lp-w} using the $A_{p'}$ weighted martingale maximal inequality proved in \cite{arxiv:1607.06319}.

Another Rubio de Francia type extrapolation argument, see \cite[Appendix A]{arxiv:2106.07281}, can be used to deduce UMD Banach space valued estimates from either \eqref{eq:Hilbert-A1} or \eqref{eq:S-w}.
This recovers one of the estimates in \cite[Theorem 1.1]{MR4181372} (the other direction similarly follows from the weighted estimate in \cite{MR3688518}).

\section{Uniformly convex Banach spaces}
In this section, we recall a few facts about uniformly convex Banach spaces that are relevant to the Banach space valued version of Theorem~\ref{thm:Hilbert}, Theorem~\ref{thm:S<M}.

\begin{definition}
Let $q \in [2,\infty)$.
A Banach space $(X,\abs{\cdot})$ is called \emph{$q$-uniformly convex} if there exists $\delta>0$ such that, for every $x,y\in X$, we have
\begin{equation}
\label{eq:q-unif-convex}
\abs[\big]{\frac{x+y}{2}}^{q} + \delta \abs[\big]{\frac{x-y}{2}}^{q}
\leq
\frac{\abs{x}^{q} + \abs{y}^{q}}{2}.
\end{equation}
\end{definition}

We will use a different (but equivalent) characterization of uniform convexity, in terms of the convex function $\phi : X\to\R_{\geq 0}$, $\phi(x) = \abs{x}^{q}$ and its directional derivative at point $x$ in direction $h$, given by
\begin{equation}
\label{eq:directional-derivative}
\phi'(x)h := \lim_{t\to 0, t>0} \frac{\phi(x+th)-\phi(x)}{t}.
\end{equation}
Convexity of $\phi$ is equivalent to the right-hand side of \eqref{eq:directional-derivative} being an increasing function of $t$ for fixed $x,h$.
By the triangle inequality and Taylor's formula, we have
\[
\abs{\abs{x+h}^{q} - \abs{x}^{q}}
\leq
(\abs{x}+\abs{h})^{q} - \abs{x}^{q}
\leq
q\abs{x}^{q-1}\abs{h} + o_{\abs{h}\to 0}(\abs{h}).
\]
Therefore, the quotient on the right-hand side of \eqref{eq:directional-derivative} is bounded from below.
Hence, the limit \eqref{eq:directional-derivative} exists, and we have
\begin{equation}
\label{eq:phi'-bd}
\abs{\phi'(x)h} \leq q \abs{x}^{q-1} \abs{h}.
\end{equation}
Moreover, for every $x\in X$, the function $h \mapsto \phi'(x)h$ is convex, which follows directly from convexity of $\phi$.

\begin{lemma}
A Banach space $(X,\abs{\cdot})$ is uniformly convex if and only if, for every $x,h\in X$, we have
\begin{equation}
\label{eq:convexity}
\abs{x+h}^{q} \geq \abs{x}^{q} + \phi'(x) h + \tilde{\delta}\abs{h}^{q}.
\end{equation}
Moreover, the largest $\delta,\tilde{\delta}$ for which \eqref{eq:q-unif-convex} and \eqref{eq:convexity} hold satisfy
\begin{equation}
\label{eq:equivalence-of-unif-convexity-const}
\frac{\delta}{2^{q-1}-1} \leq \tilde{\delta} \leq \delta.
\end{equation}
\end{lemma}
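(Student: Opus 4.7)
The plan is to prove both implications of the equivalence while tracking constants, so as to establish \eqref{eq:equivalence-of-unif-convexity-const}. The easier direction shows that \eqref{eq:convexity} with parameter $\tilde{\delta}$ implies \eqref{eq:q-unif-convex} with $\delta = \tilde{\delta}$; the harder direction recovers \eqref{eq:convexity} from \eqref{eq:q-unif-convex} with the smaller constant $\tilde{\delta} = \delta/(2^{q-1}-1)$.

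For the easy direction, given $x, y \in X$, I would set $m = (x+y)/2$ and $d = (x-y)/2$, so that $x = m+d$ and $y = m-d$, apply \eqref{eq:convexity} at the base point $m$ in the two directions $\pm d$, and add the resulting inequalities. The cross terms contribute $\phi'(m) d + \phi'(m)(-d)$, which is nonnegative by convexity of $h \mapsto \phi'(m) h$ together with the fact that this function vanishes at $h = 0$ (both recorded in the discussion before the lemma). Dividing by $2$ then yields \eqref{eq:q-unif-convex} with $\delta = \tilde{\delta}$.

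For the converse, I would set $g_{n} := \abs{x + h/2^{n}}^{q}$ and apply \eqref{eq:q-unif-convex} to the pair $(x,\, x + h/2^{n})$, whose midpoint is $x + h/2^{n+1}$ and half-difference is $h/2^{n+1}$, obtaining the recursion
\[
g_{n} \geq 2 g_{n+1} - \abs{x}^{q} + 2^{1-q(n+1)} \delta \abs{h}^{q}.
\]
Iterating this $N$ times produces
\[
g_{0} \geq \abs{x}^{q} + 2^{N} (g_{N} - \abs{x}^{q}) + \delta \abs{h}^{q} \sum_{k=1}^{N} 2^{k(1-q)},
\]
and the geometric sum converges to $1/(2^{q-1}-1)$ since $q \geq 2$. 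The quantity $2^{N} (g_{N} - \abs{x}^{q})$ is exactly the forward difference quotient from \eqref{eq:directional-derivative} at step $t = 1/2^{N}$, and by the monotonicity of that quotient recorded before \eqref{eq:phi'-bd} it decreases to $\phi'(x) h$ as $N \to \infty$. Passing to the limit in the estimate above then gives \eqref{eq:convexity} with $\tilde{\delta} = \delta/(2^{q-1}-1)$.

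The main subtlety will be this limit argument: one must confirm that the dyadic difference quotient genuinely converges to $\phi'(x) h$, and not merely dominates it. This is automatic from the monotonicity in $t$, since the limit along any positive sequence of steps tending to $0$ then coincides with the infimum over all $t > 0$, i.e.\ with $\phi'(x) h$. Combining $\tilde{\delta} \leq \delta$ from the easy direction with $\tilde{\delta} \geq \delta/(2^{q-1}-1)$ from the converse yields \eqref{eq:equivalence-of-unif-convexity-const}.
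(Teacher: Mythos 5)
Your proof is correct, and while the easy direction coincides with the paper's argument (apply \eqref{eq:convexity} at the midpoint in directions $\pm d$ and use $\phi'(m)d+\phi'(m)(-d)\geq 0$), your proof of the harder implication takes a genuinely different route. The paper runs a self-improvement (bootstrap) argument on the constant in \eqref{eq:convexity} itself: it considers the set $\calC$ of admissible constants $c$, uses \eqref{eq:q-unif-convex} once (with $y=x+h$) together with the inequality at scale $h/2$ with constant $c$ to show $c\in\calC \Rightarrow 2^{1-q}(c+\delta)\in\calC$, and then iterates from $c=0$ toward the fixed point $\delta/(2^{q-1}-1)$, invoking closedness of $\calC$ and the positive homogeneity $\phi'(x)(h/2)=\tfrac12\phi'(x)h$. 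You instead apply \eqref{eq:q-unif-convex} repeatedly along the dyadic sequence $x+h/2^{n}$, telescope the resulting recursion for $g_{n}=\abs{x+h/2^{n}}^{q}$, and sum the geometric series $\sum_{k\ge1}2^{k(1-q)}=1/(2^{q-1}-1)$ directly, identifying $2^{N}(g_{N}-\abs{x}^{q})$ as the difference quotient at $t=2^{-N}$ and letting it converge to $\phi'(x)h$ by the monotonicity recorded before \eqref{eq:phi'-bd}; your recursion and its iterated form check out, and you correctly flag and resolve the only delicate point (that the dyadic quotient converges to, rather than merely dominates, $\phi'(x)h$). The two arguments are of course cousins --- both rest on the same halving step and the same geometric series with ratio $2^{1-q}$ --- but yours is more direct and self-contained (no auxiliary set of constants, no homogeneity of $\phi'$ needed), whereas the paper's bootstrap quantifies over all $x,h$ at each stage and so never needs to pass to a limit along a specific dyadic sequence, only to the supremum of $\calC$. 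Either way the constant $\delta/(2^{q-1}-1)$ is obtained, so \eqref{eq:equivalence-of-unif-convexity-const} follows as you state.
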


The estimate \eqref{eq:convexity} can only hold with $\tilde{\delta} \leq 1$ (unless $X$ is $0$-dimensional), as can be seen by taking $x=0$.
When $X$ is a Hilbert space, we can take $q=2$ and $\delta=1$ in \eqref{eq:q-unif-convex} by the parallelogram identity and $\tilde{\delta}=1$ in \eqref{eq:convexity} by \eqref{eq:equivalence-of-unif-convexity-const}.

\begin{proof}
Clearly, the sets of $\delta$ and $\tilde{\delta}$ for which \eqref{eq:q-unif-convex} and \eqref{eq:convexity} hold are closed, so we may consider the largest such $\delta$ and $\tilde{\delta}$.

To see the first inequality in \eqref{eq:equivalence-of-unif-convexity-const}, let $\calC$ be the set of all constants $c\geq 0$ such that, for every $x,h\in X$, we have
\[
\phi(x+h) \geq \phi(x) + \phi'(x) h + c \phi(h).
\]
By convexity of $\phi$, we have $0 \in \calC$.

Let $c\in\calC$.
For any $x,h \in X$, using the uniform convexity assumption \eqref{eq:q-unif-convex} with $y=x+h$, we obtain
\[
\abs{x+h/2}^{q} + \delta \abs{h/2}^{q} \leq (\abs{x}^{q}+\abs{x+h}^{q})/2.
\]
By the definition of $c\in\calC$, it follows that
\[
\abs{x}^{q} + \phi'(x) h/2 + c \abs{h/2}^{q} + \delta \abs{h/2}^{q} \leq (\abs{x}^{q}+\abs{x+h}^{q})/2.
\]
Rearranging this inequality, we obtain
\[
\abs{x}^{q} + \phi'(x) h + 2 c \abs{h/2}^{q} + 2 \delta \abs{h/2}^{q} \leq \abs{x+h}^{q}.
\]
Therefore, $2^{1-q}(c+\delta) \in \calC$.
Since $c\in\calC$ was arbitrary, this implies
\[
\sup \calC \geq \delta/(2^{q-1}-1).
\]

To see the second inequality in \eqref{eq:equivalence-of-unif-convexity-const}, note that convexity of $h \mapsto \phi'(x)h$ implies $\phi'(x)h + \phi'(x)(-h) \geq 0$.
Applying \eqref{eq:convexity} with $(z,h)$ and $(z,h)$, we obtain
\begin{align*}
\abs{z+h}^{q} + \abs{z-h}^{q}
&\geq
2\abs{z}^{q} + \phi'(z) h + \phi'(z)(-h) + \tilde{\delta}\abs{h}^{q} + \tilde{\delta}\abs{-h}^{q}
\\ &\geq
2\abs{z}^{q} + 2 \tilde{\delta}\abs{h}^{q}.
\end{align*}
With the change of variables $x=z+h$, $y=z-h$, we obtain \eqref{eq:q-unif-convex} with $\tilde{\delta}$ in place of $\delta$.
\end{proof}

With the characterization of uniform convexity in \eqref{eq:convexity} at hand, we can finally state our main result in full generality.

\begin{theorem}
\label{thm:S<M}
For every $q \in [2,\infty)$, there exists $\gamma=\gamma(q) \in \R_{>0}$ such that the following holds.

Let $(X,\abs{\cdot})$ be a Banach space such that \eqref{eq:convexity} holds.
Let $(f_{n})_{n\in\N}$ be a martingale with values in $X$, and $(w_{n})_{n\in\N}$ an adapted sequence of weights.
Then,
\begin{equation}
\label{eq:S<M}
\E \Bigl( \gamma \abs{f_{0}} w_{0} + \tilde{\delta} \sum_{n=1}^{\infty} \frac{\abs{df_{n}}^{q}}{(f^{*}_{n})^{q-1}} w_{n} \Bigr)
\leq
\gamma \E( f^{*} w^{*})
\end{equation}
In the case $q=2$, we can take $\gamma=4$.
In the case $q=2$, $\tilde{\delta}=1$, and $w_{n}=1$ for all $n\in\N$, we can take $\gamma=3$.
\end{theorem}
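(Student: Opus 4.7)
My plan is to prove Theorem~\ref{thm:S<M} by a Bellman-function argument in the spirit of Burkholder's proof of the Davis inequality~\cite{MR1859027} and Os\k{e}kowski's continuous-time weighted refinement~\cite{MR3567926}, adjusted to handle the discrete-time $(w,w^{*})$ pairing as in~\cite{MR3688518}.

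Concretely, I would seek a potential $B : D \to \R$ on $D := \{(x,y,v) \in X \times \R_{\geq 0} \times \R_{\geq 0} : \abs{x} \leq y\}$ and a linear functional $L(x,y,v) \in X^{*}$ satisfying three properties: \emph{(i) majorization}, $B(x,y,v) \leq \gamma y v$; \emph{(ii) initial bound}, $B(x, \abs{x}, v) \geq \gamma \abs{x} v$; and \emph{(iii) step inequality}: for all $x, h \in X$, $v, w' \geq 0$ with $\abs{x} \leq y$, setting $y' := \max(y, \abs{x+h})$ and $v' := \max(v, w')$,
\[
B(x+h, y', v') \geq B(x, y, v) + L(x,y,v)\, h + \tilde{\delta} \frac{\abs{h}^{q}}{(y')^{q-1}} w'.
\]
Granted (i)--(iii), applying (iii) at step $n$ with $(x, y, v, h, w') = (f_{n-1}, f^{*}_{n-1}, w^{*}_{n-1}, df_{n}, w_{n})$ and taking conditional expectations given $\calF_{n-1}$ annihilates the linear term by the martingale property; telescoping and taking total expectation, combined with (i) and (ii), yields~\eqref{eq:S<M}.

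For the construction of $B$, I would try the ansatz $B(x,y,v) = \gamma y v - \psi(\abs{x},y)\, \rho(v)$ with $\psi(y,y) = 0$, so that $B$ agrees with $\gamma y v$ on the diagonal where the running maximum is attained, and take $L$ proportional to $-\rho(v)\, \nabla_{x}\psi$; the uniform convexity estimate~\eqref{eq:convexity} then produces the $\tilde{\delta}\abs{h}^{q}$ term from the nonlinear part of the $x$-increment, while suitable choices of $\psi, \rho$ match the monotone growth of $y'$ and $v'$. In the unweighted Hilbert case ($q=2$, $w\equiv 1$), this specializes essentially to Burkholder's function $B(x,y) = 3y - \abs{x}^{2}/y$ with $\gamma = 3$; in the weighted Hilbert case, an additional term linear in $v - w'$ accommodates the growth of $w^{*}$ and yields $\gamma = 4$; for general $q$, $\gamma(q)$ is obtained by optimizing the free parameters in $\psi$ and $\rho$.

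The main obstacle is verifying the step inequality (iii) through the casework $\{y' = y,\, y' = \abs{x+h}\} \times \{v' = v,\, v' = w'\}$. In each of the four subcases, \eqref{eq:convexity} must be invoked at the correct linearization point to bound the $x$-increment of $\psi$, and the residual terms have to combine with the weight adjustment to dominate $\tilde{\delta}\abs{h}^{q}/(y')^{q-1}\, w'$; the boundary transition at $\abs{x+h} = y$, where $B$ fails to be smooth in $y$, is handled by first establishing monotonicity of $B$ in $y$ on $y \geq \abs{x}$, reducing the harder subcase to $y' = \abs{x+h}$. Extracting the sharp values $\gamma = 4$ and $\gamma = 3$ in the special cases comes down to a final numerical optimization in these casework inequalities.
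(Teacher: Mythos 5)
Your framework is exactly the one the paper uses: a Bellman function depending on the position, the running maximum, and the running maximal weight, with a majorization property, a boundary property, and a one-step inequality whose linear term is annihilated by conditional expectation; the telescoping deduction of \eqref{eq:S<M} from these three properties is correctly described and coincides with the paper's derivation of Theorem~\ref{thm:S<M} from Proposition~\ref{prop:concavity}. The problem is that the proposal stops precisely where the actual content of the proof begins. The entire difficulty is to exhibit a concrete function and verify the step inequality with explicit constants, and you leave all of that open: $\psi$ and $\rho$ are unspecified functions ``to be optimized,'' the four-fold casework is only announced, and the values $\gamma(q)$, $\gamma=4$, $\gamma=3$ are deferred to ``a final numerical optimization.'' This is not a routine verification that can be waved through: in the hard case $\abs{x+h}>y$ the paper needs two genuinely different estimates (one using \eqref{eq:convexity}, one using only the gradient bound \eqref{eq:phi'-bd}) and an optimization of their minimum over an auxiliary parameter, leading to the condition \eqref{eq:gamma-complicated-lower-bd} and the root $t_{0}$ of $t^{q}-1-q(t+1)=0$; none of this is visible in your sketch, and there is no evidence the casework closes for any specific choice of $\psi,\rho$.

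Moreover, the concrete details you do commit to are off, which suggests the ansatz has not been tested. The paper's function, rewritten in your normalization, is $B(x,y,v)=\gamma yv-\psi(\abs{x},y)\,v$ with $\psi(s,y)=y-s^{q}/y^{q-1}$ and $\rho(v)=v$; in the unweighted Hilbert case this gives $B(x,y)=2y+\abs{x}^{2}/y$, not $3y-\abs{x}^{2}/y$ --- your stated specialization satisfies $B(x,\abs{x})=2\abs{x}$ and thus violates your own condition (ii). Likewise, the weighted case $\gamma=4$ does \emph{not} require ``an additional term linear in $v-w'$'': the paper keeps $\rho(v)=v$ and handles the jump of the weight by observing that, once $\gamma\geq 2^{q}\tilde{\delta}$ and $\gamma\geq 2q$, the coefficient multiplying $v\vee w$ in \eqref{eq:inductive-step} is nonpositive, so $v\vee w$ may simply be replaced by $v$; the price of the weights is paid entirely through the larger value of $\gamma$. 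To turn your proposal into a proof you would need to write down the function explicitly, carry out the two-case verification of the step inequality (the analogue of the proof of Proposition~\ref{prop:concavity}), and extract the admissible range of $\gamma$ from it.
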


In order to see that the linear dependence on $\tilde{\delta}$ in \eqref{eq:S<M} is optimal, we can apply this inequality with $w_{n} = (f^{*}_{n})^{q-1}$, followed by Doob's maximal inequality, which gives the estimate
\[
\E \Bigl( \gamma \abs{f_{0}}^{q} + \tilde{\delta} \sum_{n=1}^{\infty} \abs{df_{n}}^{q} \Bigr)
\leq
\gamma \E ( f^{*} )^{q}
\leq
(q')^{q} \gamma \sup_{n\in\N} \E \abs{f_{n}}^{q}.
\]
By \cite[Theorem 10.6]{MR3617459}, linear dependence on $\tilde{\delta}$ is optimal in this inequality, and hence the same holds for \eqref{eq:S<M}.

Similarly as in Section~\ref{sec:extrapolation}, Theorem~\ref{thm:S<M} implies several weighted extensions of the martingale cotype inequality \cite[Theorem 10.59]{MR3617459}.
We omit the details.

\section{The Bellman function}
The proof of Theorem~\ref{thm:S<M} is based on the Bellman function technique; we refer to the books \cite{MR2964297,VasVol_Bellman_book} for other instances of this technique.
The particular Bellman function that we use here goes back to \cite{MR1859027}; the first weighted version of it was introduced in \cite{MR3567926}.
For $x\in X$ and $y,m,v \in \R_{\geq 0}$ with $\abs{x} \leq m$, we define
\[
U(x,y,m,v) := \tilde\delta y - \frac{\abs{x}^{q}+(\gamma-1)m^{q}}{m^{q-1}} v,
\]
where $\gamma = \gamma(q)$ will be chosen later.
The main feature of this function is the following concavity property.
\begin{proposition}
\label{prop:concavity}
Suppose that $\gamma$ is sufficiently large depending on $q$ (see \eqref{eq:gamma-general} for the precise condition).
Let $(X,\abs{\cdot})$ be a Banach space such that \eqref{eq:convexity} holds.
Then, for any $x,h \in X$ and $y,m,w,v \in \R_{\geq 0}$ with $\abs{x}\leq m$, we have
\begin{equation}
\label{eq:inductive-step}
U(x+h,y+\frac{w\abs{h}^{q}}{(\abs{x+h}\vee m)^{q-1}},\abs{x+h}\vee m,v \vee w)
\leq
U(x,y,m,v) - \frac{v \phi'(x) h}{m^{q-1}}.
\end{equation}
\end{proposition}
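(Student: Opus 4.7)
The plan is to rearrange \eqref{eq:inductive-step} as $\Delta\leq 0$, where
\[
\Delta := \tilde\delta\frac{w|h|^q}{M^{q-1}} - \frac{|x+h|^q+(\gamma-1)M^q}{M^{q-1}}V + \frac{|x|^q+(\gamma-1)m^q}{m^{q-1}}v + \frac{v\phi'(x)h}{m^{q-1}},
\]
with $M := |x+h|\vee m$ and $V := v\vee w$, and to establish this by case analysis on which of $M, V$ strictly exceeds its counterpart $m, v$. The main structural input is the convexity inequality \eqref{eq:convexity}: substituting $|x|^q + \phi'(x)h \leq |x+h|^q - \tilde\delta|h|^q$ and regrouping yields
\[
\Delta \leq |x+h|^q\Bigl(\tfrac{v}{m^{q-1}}-\tfrac{V}{M^{q-1}}\Bigr) - \tilde\delta|h|^q\Bigl(\tfrac{v}{m^{q-1}}-\tfrac{w}{M^{q-1}}\Bigr) + (\gamma-1)(mv - MV).
\]

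There are four cases. In the no-jump case $|x+h|\leq m$ with $w\leq v$, each of the three terms above is separately $\leq 0$. When $|x+h|\leq m$ and $w>v$, everything factors through $(w-v)/m^{q-1}>0$ and reduces to $\tilde\delta|h|^q \leq |x+h|^q + (\gamma-1)m^q$; the estimate $|h|\leq m+|x+h|$ combined with convexity of $t\mapsto t^q$ gives this for $\gamma\geq 2^q\tilde\delta$. When $|x+h|>m$ and $w>v$, the expression is affine in $w$ with slope $\tilde\delta|h|^q/|x+h|^{q-1}-\gamma|x+h|$, non-positive under the same condition (using $|h|\leq 2|x+h|$), so the worst case is $w=v$ and we reduce to the remaining case.

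The crucial case is $|x+h|>m$, $w\leq v$: here the expression is affine in $w$ with non-negative slope, so again the worst case is $w=v$. Setting $t := |x+h|/m \geq 1$ and $r := |h|/m \geq t-1$, the inequality reduces to
\[
t^q - \gamma t + (\gamma-1) \leq \tilde\delta\, r^q(1 - t^{1-q}).
\]
For $q=2$ and $\tilde\delta=1$, taking the worst case $r=t-1$ and simplifying leads to the factorization $(t-1)[(\gamma-3)t+1]\geq 0$, which is valid for $t\geq 1$ precisely when $\gamma\geq 3$; this settles the unweighted Hilbert case, and in combination with the $w>v$ subcase gives the weighted Hilbert constant $\gamma=4$.

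The main obstacle is extending the analysis to general $q$: the convexity bound \eqref{eq:convexity} is tight only in special configurations (identically in Hilbert, only in limits in other $q$-uniformly convex spaces), so for $q>2$ one must supplement it with the derivative bound \eqref{eq:phi'-bd}, $\phi'(x)h \leq q|x|^{q-1}|h|$, in regions where the convexity estimate is loose, in order to identify a single constant $\gamma=\gamma(q)$ valid independently of $\tilde\delta$.
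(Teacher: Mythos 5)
Your case analysis is exactly the paper's: the same split on whether $\abs{x+h}\vee m$ and $v\vee w$ jump, the same monotonicity-in-$w$ reductions to $w=v$ (the paper's \eqref{eq:aux-gamma-1} and \eqref{eq:aux-gamma-2} steps), and the same use of \eqref{eq:convexity} to reduce the crucial case to the paper's \eqref{eq:1}. Your computation for $q=2$, $\tilde{\delta}=1$ reproducing the constants $3$ and $4$ is correct. But the proposition is stated for all $q\in[2,\infty)$ and all $\tilde{\delta}\in(0,1]$, and what you call ``the main obstacle'' is a genuine gap, not a loose end: the convexity estimate alone provably cannot close the argument. After substituting $r=t-1$ into your reduced inequality, the required condition is
\[
\gamma \geq \sup_{t>1}\frac{1}{t-1}\bigl(t^{q}-1-\tilde{\delta}(t-1)^{q}(1-t^{1-q})\bigr),
\]
and the right-hand side is $+\infty$ whenever $\tilde{\delta}<1$ (the quotient grows like $(1-\tilde{\delta})t^{q-1}$), and also whenever $q>2$ even with $\tilde{\delta}=1$ (it grows like $qt^{q-2}$). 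So the missing piece is needed already for $q=2$ on, say, $L^{p}$ with $1<p<2$, where $\tilde{\delta}=p-1<1$. The paper's resolution is precisely the supplement you gesture at: a second estimate using only \eqref{eq:phi'-bd}, namely $\gamma\geq\frac{1}{t-1}\bigl(\tilde{\delta}\tilde{t}^{q}/t^{q-1}+q\tilde{t}\bigr)$, which handles large $t$; one then takes the pointwise minimum of the two bounds, observes that the minimum over the auxiliary parameter is attained at the crossover, and evaluates it at the root $t_{0}$ of $t^{q}-1-q(t+1)=0$, giving \eqref{eq:gamma-complicated-lower-bd} and the explicit condition \eqref{eq:gamma-general}. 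Without carrying out this two-regime optimization, no finite $\gamma(q)$ is produced.

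One smaller imprecision: in the case $\abs{x+h}\leq m$, $w>v$, your reduction requires $\tilde{\delta}\abs{h}^{q}\leq\abs{x+h}^{q}+(\gamma-1)m^{q}$, and the condition $\gamma\geq 2^{q}\tilde{\delta}$ you state is not sufficient for small $\tilde{\delta}$ (the worst case $\abs{x+h}=0$, $\abs{h}=m$ already forces $\gamma\geq 1+\tilde{\delta}$, and $\abs{h}$ can be as large as $2m$). The correct sufficient condition is $\gamma\geq 2^{q}$ (or $\gamma\geq 1+2^{q}\tilde{\delta}$), which is harmless since it is subsumed by \eqref{eq:gamma-general}, but as written the claim is false for $\tilde{\delta}<2^{-q}$.
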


\begin{proof}[Proof of Theorem~\ref{thm:S<M} assuming Proposition~\ref{prop:concavity}]
Using \eqref{eq:inductive-step} with
\[
x = f_{n},
\quad
y = \tilde{S}_{n} := \gamma\abs{f_{0}} w_{0}
+ \tilde{\delta} \sum_{j=1}^{n} \frac{\abs{df_{j}}^{q}}{(f^{*}_{j})^{q-1}} w_{j},
\quad
m = f^{*}_{n},
\]
\[
w=w_{n},
\quad
v=w^{*}_{n},
\quad
h = df_{n+1},
\]
we obtain
\begin{equation}
U(f_{n+1},\tilde{S}_{n+1},f^{*}_{n+1},w^{*}_{n+1})
\leq
U(f_{n},\tilde{S}_{n},f^{*}_{n},w^{*}_{n}) - \frac{\phi'(f_{n}) df_{n+1}}{(f^{*}_{n})^{q-1}} w_{n}^{*}.
\end{equation}
By convexity of $h \mapsto \phi'(x)h$, we have
\[
\E (\frac{\phi'(f_{n}) df_{n+1}}{(f^{*}_{n})^{q-1}} w_{n}^{*} | \calF_{n} )
=
\frac{w_{n}^{*}}{(f^{*}_{n})^{q-1}} \E ( \phi'(f_{n}) df_{n+1} | \calF_{n} )
\geq 0.
\]
Taking expectations, we obtain
\[
\E U(f_{n+1},\tilde{S}_{n+1},f^{*}_{n+1},w^{*}_{n+1})
\leq
\E U(f_{n},\tilde{S}_{n},f^{*}_{n},w^{*}_{n}).
\]
Iterating this inequality, we obtain
\begin{multline*}
\E \Bigl( \gamma \abs{f_{0}} w_{0} + \tilde{\delta} \sum_{n=1}^{N} \frac{\abs{df_{n}}^{q}}{(f^{*}_{n})^{q-1}} w_{n} - \gamma f^{*}_{N} w^{*}_{N} \Bigr)
\\ \leq
\E U(f_{N},\tilde{S}_{N},f^{*}_{N},w^{*}_{N})
\leq
\E U(f_{0},\tilde{S}_{0},f^{*}_{0},w^{*}_{0})
=
0.
\qedhere
\end{multline*}
\end{proof}

\begin{remark}
The above proof in fact shows the pathwise inequality
\[
\gamma \abs{f_{0}} w_{0} + \tilde{\delta} \sum_{n=1}^{N} \frac{\abs{df_{n}}^{q}}{(f^{*}_{n})^{q-1}} w_{n}
\leq
\gamma f^{*}_{N} w^{*}_{N}
- \sum_{n=1}^{N}
\frac{\phi'(f_{n}) df_{n+1}}{(f^{*}_{n})^{q-1}} w_{n}^{*}.
\]
This can be used to improve the first part of \cite[Theorem 1.1]{MR3322322}.
For simplicity, consider the scalar case $X=\C$ (so that $q=2$ and $\tilde{\delta}=1$) with $f_{0}=0$ and $w_{n}=1$.
The above inequality then simplifies to
\[
\sum_{n=1}^{N} \frac{\abs{df_{n}}^{q}}{f^{*}_{n}}
\leq
3 f^{*}_{N}
- \sum_{n=1}^{N} \frac{2 f_{n} df_{n+1}}{f^{*}_{n}}.
\]
Using \eqref{eq:Sf-Holder}, the above inequality, and concavity of the function $x\mapsto x^{1/2}$, we obtain
\begin{align*}
S_{N}f
&\leq
(f^{*}_{N})^{1/2} \Bigl( \sum_{n=1}^{N} \frac{\abs{df_{n}}^{2}}{f^{*}_{n}} \Bigr)^{1/2}
\\ &\leq
(f^{*}_{N})^{1/2} \Bigl( 3 f^{*}_{N} - \sum_{n=1}^{N} \frac{2 f_{n} df_{n+1}}{f^{*}_{n}} \Bigr)^{1/2}
\\ &\leq
(f^{*}_{N})^{1/2} \Bigl( (3 f^{*}_{N})^{1/2} - \frac{1}{2}(3 f^{*}_{N})^{-1/2} \sum_{n=1}^{N} \frac{2 f_{n} df_{n+1}}{f^{*}_{n}} \Bigr)
\\ &=
\sqrt{3} f^{*}_{N} - \sum_{n=1}^{N} \frac{f_{n} df_{n+1}}{\sqrt{3} f^{*}_{n}}.
\end{align*}
\end{remark}

\begin{proof}[Proof of Proposition~\ref{prop:concavity}]
If $\abs{x+h}\leq m$, then
\begin{align*}
\MoveEqLeft
U(x+h,y+\frac{w\abs{h}^{q}}{(\abs{x+h}\vee m)^{q-1}},\abs{x+h}\vee m,v \vee w)
\\ &=
\tilde{\delta}(y+\frac{w\abs{h}^{q}}{m^{q-1}})
-\frac{\abs{x+h}^{q}+(\gamma-1)m^{q}}{m^{q-1}} (v\vee w)
\\ &\leq
\tilde{\delta}(y+\frac{w\abs{h}^{q}}{m^{q-1}})
-
\frac{\abs{x}^{q} + \phi'(x) h + \tilde{\delta}\abs{h}^{q}+(\gamma-1)m^{q}}{m^{q-1}} (v\vee w)
\\ &\leq
\tilde{\delta} y
-
\frac{\abs{x}^{q} + \phi'(x) h +(\gamma-1)m^{q}}{m^{q-1}} (v\vee w)
\\ &\leq
\tilde{\delta} y -
\frac{\abs{x}^{q} + \phi'(x) h + (\gamma-1)m^{q}}{m^{q-1}} v
\\ &=
U(x,y,m,w,v) - \frac{\phi'(x) h}{m^{q-1}} v.
\end{align*}
In the last inequality, we used
\begin{equation}
\label{eq:aux-gamma-1}
\abs{\phi'(x)h}
\leq
q \abs{x}^{q-1} \abs{h}
\leq
q \abs{x}^{q-1} (\abs{x} + m)
\leq
\abs{x}^{q} + (2q-1) m^{q}
\leq
\abs{x}^{q} + (\gamma-1) m^{q},
\end{equation}
which holds provided that $\gamma \geq 2q$.

If $\abs{x+h}>m$, then we need to show
\begin{multline*}
\tilde{\delta}(y+\frac{w\abs{h}^{q}}{\abs{x+h}^{q-1}})
- \frac{\abs{x+h}^{q} + (\gamma-1) \abs{x+h}^{q}}{\abs{x+h}^{q-1}} (v\vee w)
\\ \leq
\tilde{\delta}y - \frac{\abs{x}^{q} + (\gamma-1)m^{q}}{m^{q-1}} v
- \frac{\phi'(x) h}{m^{q-1}} v.
\end{multline*}
This is equivalent to
\begin{equation}
\label{eq:2}
\frac{\tilde{\delta}\abs{h}^{q}w - \gamma \abs{x+h}^{q} (v\vee w)}{\abs{x+h}^{q-1}}
\leq
\frac{-\abs{x}^{q} v-(\gamma-1)m^{q}v}{m^{q-1}}
- \frac{\phi'(x) h}{m^{q-1}} v.
\end{equation}
Assuming that $\gamma\geq 2^{q}\tilde{\delta}$, we have
\begin{equation}
\label{eq:aux-gamma-2}
\tilde{\delta} \abs{h}^{q}
\leq
\tilde{\delta} (\abs{x+h} + \abs{x})^{q}
\leq
\tilde{\delta} (2 \abs{x+h})^{q}
\leq
\gamma \abs{x+h}^{q},
\end{equation}
and it follows that the left-hand side of \eqref{eq:2} is
\[
\leq (v\vee w) \frac{\tilde{\delta}\abs{h}^{q}-\gamma \abs{x+h}^{q}}{\abs{x+h}^{q-1}}
\leq v \frac{\tilde{\delta}\abs{h}^{q}-\gamma \abs{x+h}^{q}}{\abs{x+h}^{q-1}}.
\]
Hence, it suffices to show
\begin{equation}\label{eq:1}
\frac{\tilde{\delta}\abs{h}^{q}-\gamma \abs{x+h}^{q}}{\abs{x+h}^{q-1}}
\leq
\frac{-\abs{x}^{q}-(\gamma-1)m^{q}}{m^{q-1}} - \frac{\phi'(x) h}{m^{q-1}}.
\end{equation}
Let $t := \abs{x+h}/m > 1$ and $\tilde{t} := \abs{h}/m$.
Note that $\abs{t-\tilde{t}} = \abs{\abs{x+h}-\abs{h}}/m \leq \abs{h}/m \leq 1$.
We will show \eqref{eq:1} in two different ways, depending on the values of $t,\tilde{t}$.

\textbf{Estimate 1.}
By \eqref{eq:convexity}, the inequality \eqref{eq:1} will follow from
\[
\frac{\tilde{\delta}\abs{h}^{q}-\gamma \abs{x+h}^{q}}{\abs{x+h}^{q-1}}
\leq
\frac{-\abs{x+h}^{q}+\tilde{\delta}\abs{h}^{q}-(\gamma-1)m^{q}}{m^{q-1}}.
\]
This is equivalent to
\[
\tilde{\delta}\tilde{t}^{q}/t^{q-1}-\gamma t
\leq
-t^{q}+\tilde{\delta}\tilde{t}^{q}-(\gamma-1).
\]
Equivalently,
\[
t^{q}-\gamma t + (\gamma-1)
\leq
\tilde{\delta}\tilde{t}^{-q} (1-1/t^{q-1}).
\]
so this would follow from
\[
\gamma
\geq
\frac{1}{t-1} \bigl( t^{q}-1 - \tilde{\delta} \tilde{t}^{q}(1-1/t^{q-1}) \bigr).
\]

\textbf{Estimate 2.}
The inequality \eqref{eq:1} is implied by
\[
\frac{\tilde{\delta}\abs{h}^{q}}{\abs{x+h}^{q-1}}
+
\frac{\abs{x}^{q}+(\gamma-1)m^{q}}{m^{q-1}}
+
\frac{\abs{\phi'(x) h}}{m^{q-1}}
\leq
\gamma \abs{x+h}.
\]
This is equivalent to
\[
\frac{\tilde{\delta}\tilde{t}^{q}}{t^{q-1}}
+
\frac{\abs{x}^{q}}{m^{q}}
+
(\gamma-1)
+
\frac{\abs{\phi'(x) h}}{m^{q}}
\leq
\gamma t.
\]
Using \eqref{eq:phi'-bd}, we see that the left-hand side is bounded by
\[
\frac{\tilde{\delta} \tilde{t}^{q}}{t^{q-1}}
+
1
+
(\gamma-1)
+
\frac{q \abs{x}^{q-1} \cdot \abs{h}}{m^{q}}
\leq
\frac{\tilde{\delta} \tilde{t}^{q}}{t^{q-1}}
+
\gamma
+
q \tilde{t}.
\]
Hence, it suffices to assume
\[
\frac{\tilde{\delta} \tilde{t}^{q}}{t^{q-1}}
+
\gamma
+
q (t+1)
\leq
\gamma t,
\]
or, in other words,
\[
\gamma
\geq
\frac{1}{t-1}
\bigl( \frac{\tilde{\delta} \tilde{t}^{q}}{t^{q-1}} + q \tilde{t} \bigr).
\]
Combining the two estimates, we see that \eqref{eq:1} holds provided that
\begin{equation}
\label{eq:gamma-complicated-lower-bd}
\gamma \geq
\sup_{t>1, \abs{t-\tilde{t}} \leq 1} \frac{1}{t-1}
\min \bigl( t^{q}-1 - \tilde{\delta} \tilde{t}^{q}(1-1/t^{q-1}),
\frac{\tilde{\delta} \tilde{t}^{q}}{t^{q-1}} + q \tilde{t} \bigr).
\end{equation}
In order to obtain a more easily computable bound, we estimate
\[
RHS\eqref{eq:gamma-complicated-lower-bd}
\leq
\sup_{t\geq 1} \sup_{K\geq 0} \frac{1}{t-1} \min\bigl( t^{q}-1 - K(1-1/t^{q-1}),
K/t^{q-1} + q (t+1) \bigr).
\]
Since we are taking the minimum of an increasing and a decreasing function in $K$, the supremum over $K$ is achieved for the value of $K$ for which these functions take equal values, or for $K=0$ if the latter value in negative.
Hence, substituting $K=\max(t^{q}-1-q(t+1),0)$, we obtain
\[
RHS\eqref{eq:gamma-complicated-lower-bd}
\leq
\sup_{t\geq 1} \frac{1}{t-1} \bigl( t^{q}-1 - \max(t^{q}-1-q(t+1),0)(1-1/t^{q-1}) \bigr),
\]
The function $t \mapsto t^{q}-1-q(t+1)$ is strictly monotonically increasing, so there is a unique solution $t_{0}$ to $t_{0}^{q}-1-q(t_{0}+1) = 0$.
The supremum is then assumed for $t=t_{0}$, since
\[
\frac{\dif}{\dif t} \frac{t^{q}-1}{t-1} = \frac{(q-1)t^{q}+1-qt^{q-1}}{(t-1)^{2}} \geq 0
\]
by the AMGM inequality, and
\[
\frac{\dif}{\dif t} \frac{1}{t-1} \bigl( q(t+1) + \frac{t^{q}-1}{t^{q-1}} \bigr)
=
- \frac{2q}{(t-1)^{2}} - \frac{t^{q-2}}{(t^{q}-t^{q-1})^{2}} (t^{q}-qt+(q+1)) \leq 0.
\]
Hence,
\[
RHS\eqref{eq:gamma-complicated-lower-bd}
\leq
\frac{q(t_{0}+1)}{t_{0}-1}.
\]
Collecting the conditions on $\gamma$ in the proof, we see that it suffices to assume
\begin{equation}
\label{eq:gamma-general}
\gamma \geq \max ( \frac{q(t_{0}+1)}{t_{0}-1}, 2q, 2^{q}).
\end{equation}
For $q=2$, we have $t_{0}=3$, so we can take $\gamma = 4$.

In the case $v=w=1$, we do not use \eqref{eq:aux-gamma-1} and \eqref{eq:aux-gamma-2}, so the only condition on $\gamma$ is given by \eqref{eq:gamma-complicated-lower-bd}.
If we additionally assume $q=2$ and $\tilde{\delta} = 1$, that condition can be further simplified in the same way as in \cite{MR3567926}.
Namely, it suffices to ensure
\[
\gamma \geq
\sup_{t>1, \abs{t-\tilde{t}} \leq 1} \frac{1}{t-1}
\bigl( t^{2}-1 - \tilde{t}^{2}(1-1/t) \bigr).
\]
The supremum in $\tilde{t}$ is assumed for $\tilde{t} = (t-1)$, so this condition becomes
\[
\gamma \geq
\sup_{t>1} \frac{1}{t-1}
\bigl( t^{2}-1 - (t-1)^{2}(1-1/t) \bigr)
=
\sup_{t>1} t+1 - (t-1)^{2}/t
=
\sup_{t>1} 3 - 1/t
=
3.
\]
This is the bound used in \eqref{eq:Hilbert-L1}.
\end{proof}

\section{Optimality}
\label{sec:sharpness-of-4}
In this section, we show that the inequality \eqref{eq:Hilbert-A1} fails if $1/4$ is replaced by any larger number, already if the weights constitute a (positive) martingale.

Let $\Omega = \N_{\geq 1}$ with the filtration $\calF_{n}$ such that the $n$-th $\sigma$-algebra $\calF_{n}$ is generated by the atoms $\Set{1},\dotsc,\Set{n}$.
Let $k\in\R_{>0}$ be arbitrary.
The measure on $(\Omega,\calF=\vee_{n\in\N} \calF_{n})$ is given by $\mu(\Set{\omega}) = k(k+1)^{-\omega}$.
The martingale and the weights are given by
\[
f_{n}(\omega) =
\begin{cases}
(-1)^{\omega+1} \frac{k+2}{k}, & \omega\leq n,\\
(-1)^{n}, & \omega>n,
\end{cases}
\quad
w_{n}(\omega) =
\begin{cases}
0, & \omega \leq n,\\
(k+1)^{n}, & \omega > n.
\end{cases}
\]
Note that both these processes are indeed martingales.
Their running maxima are given by
\[
f^{*}_{n}(\omega) =
\begin{cases}
\frac{k+2}{k}, & \omega\leq n,\\
1, & \omega>n,
\end{cases}
\quad
w^{*}_{n}(\omega) =
\begin{cases}
(k+1)^{\omega-1} & \omega \leq n,\\
(k+1)^{n}, & \omega > n.
\end{cases}
\]
Now, we compute both sides of \eqref{eq:Hilbert-A1}:
\[
\E \sum_{n \leq N} \frac{\abs{df_{n}}^{2}}{f^{*}_{n}} w_{n}
=
\sum_{n \leq N} \mu(\N_{>n}) \frac{2^{2}}{1} (k+1)^{n}
=
\sum_{n \leq N} (k+1)^{-n} \frac{2^{2}}{1} (k+1)^{n}
=
4N,
\]
and
\begin{align*}
\E (f^{*}_{N} w^{*}_{N})
&=
\sum_{\omega\leq N} \mu(\Set{\omega}) \frac{k+2}{k} (k+1)^{\omega-1}
+
\mu(\N_{>N}) \cdot 1 \cdot (k+1)^{N}
\\ &=
\sum_{\omega\leq N} k(k+1)^{-\omega} \frac{k+2}{k} (k+1)^{\omega-1}
+
(k+1)^{-N} \cdot 1 \cdot (k+1)^{N}
\\ &=
N \frac{k+2}{k+1} + 1.
\end{align*}
Since $k$ and $N$ can be arbitrarily large, we see that the constant in \eqref{eq:Hilbert-A1} is optimal.

\printbibliography
\end{document}